\theoremstyle{definition}
\newtheorem{definition}{Definition}
\theoremstyle{plain}
\newtheorem{theorem}[definition]{Theorem}
\theoremstyle{plain}
\theoremstyle{plain}
\theoremstyle{plain}
\theoremstyle{definition}
\newtheorem{remark}[definition]{Remark}
\theoremstyle{plain}
\newtheorem{lemma}[definition]{Lemma}
\theoremstyle{plain}
\theoremstyle{plain}
\newtheorem{corollary}[definition]{Corollary}
\theoremstyle{definition}
\theoremstyle{definition}
\newcommand{\R}{\mathbb{R}}
\newcommand{\Reals}{\mathbb{R}}
\newcommand{\N}{\mathbb{N}}
\newcommand{\lef}{\left(}
\DeclareMathOperator{\conv}{Conv}
\newcommand{\beq}{\begin{equation}}
\newcommand{\eeq}{\end{equation}}
\DeclareMathOperator{\dist}{dist}
\begin{document} 

\date{\today}
\title{Ancient Mean Curvature Flows\\and their Spacetime Tracks}

\author{Francesco Chini\\}
\author{\\Niels Martin M\o{}ller}
\address{Francesco Chini, Department of Mathematical Sciences, Copenhagen University.}
\email{chini@math.ku.dk}

\address{Niels Martin M\o{}ller, Department of Mathematical Sciences, Copenhagen University.}
\email{nmoller@math.ku.dk}
\thanks{Francesco Chini was partially supported by the Villum Foundation's QMATH Centre.  Both authors were partially supported by Niels Martin M\o{}ller's Sapere Aude grant from The Independent Research Fund Denmark (Danish Ministry).}
\keywords{Mean curvature flow, ancient solutions, minimal surfaces, bi-halfspace theorem, wedge theorem, Omori-Yau principle, maximum principles, nonlinear partial differential equations, parabolic equations.}

\begin{abstract}
We study properly immersed ancient solutions of the codimension one mean curvature flow in $n$-dimensional Euclidean space, and classify the convex hulls of the subsets of space reached by any such flow.

In particular, it follows that any compact convex ancient mean curvature flow can only have a slab, a halfspace or all of space as the closure of its set of reach.

The proof proceeds via a bi-halfspace theorem (also known as a wedge theorem) for ancient solutions derived from a parabolic Omori-Yau maximum principle for ancient mean curvature flows.

\end{abstract}
\maketitle



\section{Introduction}

Ancient mean curvature flows show up naturally in the study of singularities, as tangent flows from blow-up analysis (for a basic discussion, see e.g. Chapter 4 in \cite{mantegazza}). Especially in recent years they have gained much attention and some partial classifications are now available.

Convex ancient solutions arise in the case of mean convex mean curvature flows \cite{white00}, \cite{HS99a}, \cite{HS99b} and have also been investigated by \cite{wang}.

In the case of the curve shortening flow,  Daskalopoulos, Hamilton and Sesum  provided a complete classification  \cite{dhs_2010} of closed convex embedded ancient curve shortening flows. Note that there also exist nonconvex examples \cite{ay18}.

In \cite{huisken_sinestrari}, Huisken and Sinestrari again studied ancient mean curvature flow under several natural curvature assumptions, namely convexity and $k$-convexity, and provided some characterizations of the shrinking sphere, assuming convexity. 

Haslhofer and Hershkovits \cite{haslhofer_hershkovits} proved the existence of an ancient oval in dimensions $n>1$, as conjectured by Angenent in \cite{angenent13}, building on an idea of White \cite{white03}. Recently Angenent, Daskalopoulos and Sesum \cite{ads_18}  proved the uniqueness of this ancient mean curvature flow under some assumptions, based on their previous work \cite{ads15} (using the barriers from \cite{steve-niels}). More precisely they prove that an ancient mean curvature flow which is compact, smooth, noncollapsed, not self-similar and uniformly 2-convex must be the solution constructed in \cite{white03}-\cite{angenent13}-\cite{haslhofer_hershkovits}.

In \cite{brendle_choi} Brendle and Choi classified convex, noncompact, noncollapsed ancient flows in $\R^3$, proving that they agree (up to isometry and up to scaling) with the self-translating bowl soliton. Recently, in \cite{brendle_choi_2018}, they extended their result to higher dimensions, under the extra assumption of uniform 2-convexity.

In \cite{choi_haslhofer_hershkovits}  Choi,  Haslhofer,  Hershkovits classified all 2-dimensional ancient mean curvature flows with low entropy in $\R^3$.

In this paper we generalize some of the results from \cite{chini_moeller}, which classified the projected convex hulls of all proper self-translaters. This establishes the following string of generalizations: The below time-dependent Theorem \ref{theorem_classify} for ancient flows implies the time-independent self-translating hypersurfaces case \cite{chini_moeller}, which again implies the minimal hypersurface case \cite{hoffman_meeks}, which finally implies the Euclidean case of conically bounded minimal surfaces \cite{omori}.

Note that in Theorem \ref{theorem_ancient} and Theorem \ref{theorem_classify} below, we do not have any curvature or non-collapsing nor entropy assumptions. Here we only need to assume the flows to be properly immersed. 

We expect these results to be useful in the future investigation of ancient solutions, both for problems of classification and construction of examples, and hence to the investigation of the set of possible singularities in the mean curvature flow.

Finally, also in regularity questions for mean curvature flow with boundary, bi-halfspace theorems (a.k.a. wedge theorems) are useful: In January 2019, Brian White posted a paper \cite{white19} with a result on boundary regularity (announced some time ago, e.g. in \cite{white09}, see also \cite{stone}), proved there using a new wedge theorem for self-shrinking Brakke flows. It would be interesting to understand its relation to our smooth results in \cite{chini_moeller} and in Theorem \ref{theorem_ancient} and Theorem \ref{theorem_classify} below.


%

\section{Preliminaries}

\begin{definition}
Let $M^n$ be a smooth, connected $n$-dimensional manifold without boundary and let $I \subseteq \R$ be a (time) interval. A mean curvature flow is a smooth map $F \colon M \times I \to \R^{n+1}$ such that $F_t : M \to \R^{n+1}$ is an immersion for every $t \in I$, where $F_t(x) \coloneqq F(x, t)$, and $F$ satisfies the following equation
\begin{equation}\label{mcf_equation}
\frac{\partial F}{\partial t}= \overrightarrow{H}.
\end{equation}
The mean curvature flow is said to be an \emph{ancient}, \emph{immortal} or \emph{eternal} solution,  if respectively after a time translation $I = (-\infty, 0)$, $(0, \infty)$ or $\R$.
\end{definition}

In what follows, we will denote by $M_t$ the manifold $M$ endowed with the pullback metric induced by $F_t$, i.e.
$$
M_t \coloneqq \left(M, F_t^* \langle \cdot, \cdot \rangle_{\R^{n+1}} \right).
$$
The Levi-Civita connection and the Laplacian on $M_t$ will be denoted by $\nabla^{M_t}$ and $\Delta^{M_t}$ respectively.

Moreover, we will always consider proper mean curvature flows, meaning that for every $t \in I$ the map $F_t: M\to\Reals^{n+1}$ is a proper immersion. We remind the reader that properly immersed hypersurfaces are geodesically complete w.r.t. the induced Riemannian metric (by the Heine-Borel property and Hopf-Rinow). As always, most of our results fail without the properness assumption, see e.g. the examples in \cite{nadirashvili} of minimal surfaces non-properly immersed into ambient balls.

\section{Main results}

\begin{lemma}[Omori-Yau Maximum Principle for Ancient MCFs]\label{omori_yau}

Let  $F \colon M \times (-\infty, 0 ) \to \R^{n+1}$ be a proper ancient mean curvature flow. 
Let $f \colon M \times (-\infty, 0) \to \R$ be a bounded and twice differentiable function.

Then there is a sequence of points $(x_i, t_i) \in M \times (-\infty, 0)$ such that 
\begin{enumerate}[(i)]
\item $\lim_{i \rightarrow \infty} f(x_i, t_i) = \sup_{M \times   (-\infty, 0)} f$,
\item $\lim_{i \rightarrow \infty} |\nabla^{M_{t_i}} f(x_i, t_i)| = 0$,
\item $\liminf_{i \rightarrow \infty} \left( \frac{\partial}{\partial t} - \Delta^{M_{t_i}} \right) f(x_i, t_i) \ge 0$.
\end{enumerate}
\end{lemma}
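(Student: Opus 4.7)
The plan is to adapt the classical Omori--Yau perturbation trick to the parabolic, MCF-specific setting, replacing the intrinsic distance function on $M_t$ by a barrier built from the ambient position map $F$. The key reason we can dispense with the Ricci-curvature lower bound required in the classical (elliptic) Omori--Yau principle is the identity
$$
(\partial_t - \Delta^{M_t})\,|F(x,t) - F_0|^2 = -2n, \qquad F_0 \in \R^{n+1}\text{ fixed},
$$
which follows immediately from $\partial_t F = \overrightarrow{H} = \Delta^{M_t} F$ together with the orthonormal-frame identity $|\nabla F|^2 \equiv n$.

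Concretely, I would fix a sequence $\varepsilon_i \searrow 0$ and choose $(y_i, s_i) \in M \times (-\infty, 0)$ with $f(y_i, s_i) > \sup f - \varepsilon_i^2$. Setting $F_0 := F(y_i, s_i)$, I would introduce the non-negative barrier and perturbed function
$$
\phi_i(x,t) := |F(x,t) - F_0|^2 + (s_i - t), \qquad g_i := f - \varepsilon_i \phi_i \qquad \text{on } M \times (-\infty, s_i],
$$
for which the identity above gives $(\partial_t - \Delta^{M_t})\phi_i = -(2n+1)$. Once one has shown (see the next paragraph) that $g_i$ attains its supremum at some $(x_i, t_i) \in M \times (-\infty, s_i]$, the usual first- and second-derivative tests---spatially interior, and either interior or one-sided in the future time direction when $t_i = s_i$---give
$$
\nabla^{M_{t_i}} f(x_i,t_i) = \varepsilon_i \nabla^{M_{t_i}} \phi_i(x_i,t_i), \qquad (\partial_t - \Delta^{M_{t_i}})\,f(x_i,t_i) \ge -(2n+1)\,\varepsilon_i.
$$
Moreover $g_i(x_i, t_i) \ge g_i(y_i, s_i) = f(y_i, s_i)$ yields the key a priori bound
$$
0 \le \varepsilon_i \phi_i(x_i, t_i) \le f(x_i, t_i) - f(y_i, s_i) \le \varepsilon_i^2,
$$
so $|F(x_i, t_i) - F_0|^2 \le \varepsilon_i$ and $s_i - t_i \le \varepsilon_i$. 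Combined with $|\nabla^{M_{t_i}}\phi_i| \le 2\,|F(x_i,t_i) - F_0|$, this immediately delivers (i), (ii), (iii) in the limit $\varepsilon_i \to 0$.

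The main obstacle is making the attainment of $\sup g_i$ rigorous. The temporal penalty $(s_i - t)$ forces $g_i \to -\infty$ as $t \to -\infty$, restricting the effective domain to a bounded time interval; the spatial penalty $|F - F_0|^2$, together with per-time properness of $F_t$, forces $g_i \to -\infty$ on each fixed time slice as $x$ leaves compact sets of $M$. The point that requires care is promoting these two slice-wise coercivity statements to a genuine spacetime precompactness of near-maximizing sequences: along any such sequence $(x_k, t_k)$ the estimate $\phi_i \le \varepsilon_i + O(1)$ confines $F(x_k, t_k)$ to a fixed compact subset of $\R^{n+1}$ and $t_k$ to a bounded time interval, and one then uses smoothness of the spacetime map $F$ together with per-time properness at the limit time to rule out escape of $x_k$ to infinity in $M$. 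Once this is in place, the conclusions follow as above.
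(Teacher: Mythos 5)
Your proposal is correct in outline, and it takes a genuinely different route from the paper at one key juncture: the handling of the unbounded (past) time direction. The paper uses a \emph{fixed} spatial penalty $\varepsilon_i\,\varrho^2 = \varepsilon_i |F|^2$ (with $\varepsilon_i$ calibrated so that the penalty is tiny at the near-supremal reference points), then treats time separately: it shows via Hamilton's trick that $L_i(t):=\max_M f_i(\cdot,t)$ is locally Lipschitz, and proves a small real-analysis lemma (Lemma~\ref{lemma_lipschitz}) to extract a time $t_i$ where $L_i'(t_i)\ge -2\varepsilon_i$ and $L_i(t_i)$ is near $\sup L_i$ --- precisely to cope with the possibility that the supremum is only approached as $t\to-\infty$. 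You instead make the penalty \emph{adaptive} in both space and time: recentering at $F_0=F(y_i,s_i)$ so that $\phi_i(y_i,s_i)=0$ automatically, and adding the linear term $\varepsilon_i(s_i-t)$ so that $g_i\to-\infty$ as $t\to-\infty$, which upgrades the slice-wise maxima to a bona fide spacetime maximum on $M\times(-\infty,s_i]$ and replaces Hamilton's trick and the Lipschitz lemma by an ordinary first/second derivative test (with a one-sided $\partial_t$ at $t_i=s_i$, which you correctly flag). The cost is the extra $\varepsilon_i$ in $(\partial_t-\Delta)\phi_i=-(2n+1)$, which is harmless. What the two approaches \emph{share}, and what you correctly identify as the real content, is the spacetime precompactness step: per-time properness of $F_t$ does not by itself give properness of the spacetime map, and one must show that a near-maximizing sequence $(x_k,t_k)$ with $F(x_k,t_k)$ confined to a compact ball and $t_k$ confined to a compact interval cannot escape to infinity in $M$. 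The paper carries this out carefully (its ``Claim,'' using closedness of $F_{t_*}(C)$, positivity of $\dist(F_{t_*}(C),\bar B_S)$, and continuity of $F$ in time); your sketch of this step should be expanded to the same level of detail, but the idea you describe is exactly the right one and closes the gap. Net assessment: a correct alternative proof, arguably slightly cleaner in avoiding the auxiliary Lipschitz lemma, with the same essential compactness input.
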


\begin{theorem}[Wedge Theorem for Ancient Mean Curvature Flows]\label{theorem_ancient}
Let $H_1 $ and $H_2$ be two halfspaces of $\R^{n+1}$ such that the hyperplanes $P_1 \coloneqq \partial H_1 $ and $P_2 \coloneqq \partial H_2$ are not parallel. 

Then $H_1 \cap H_2$ does not contain any proper ancient mean curvature flow. More precisely, there does not exist any proper ancient mean curvature flow $F \colon M \times (-\infty , 0) \to \R^{n+1}$ such that $F_t(M) \subseteq H_1 \cap H_2$ for all times $t \in (-\infty, 0)$.
\end{theorem}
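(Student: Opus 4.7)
The plan is to argue by contradiction using the parabolic Omori-Yau principle (Lemma \ref{omori_yau}) together with the fact that affine linear functions on $\R^{n+1}$ restrict to caloric functions on any mean curvature flow. Suppose there exists a proper ancient MCF $F \colon M \times (-\infty, 0) \to \R^{n+1}$ with $F_t(M) \subseteq H_1 \cap H_2$ for every $t < 0$. After a rigid motion, write $H_i = \{\ell_i \geq 0\}$ with $\ell_i$ affine linear and unit gradient $a_i := \nabla \ell_i$; non-parallelism of $P_1$ and $P_2$ is equivalent to $\{a_1, a_2\}$ being linearly independent. Replacing each halfspace by the smallest one containing the image, one may assume $\inf_{M \times (-\infty, 0)} \ell_i = 0$ for $i = 1, 2$.

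The essential algebraic identity is that, because $\partial_t F = \overrightarrow{H} = \Delta^{M_t} F$, for any affine linear function $\ell$ on $\R^{n+1}$ the composition $\ell \circ F$ is caloric on the flow: $(\partial_t - \Delta^{M_t})(\ell \circ F) = 0$. A chain-rule computation therefore gives, for any $C^2$ function $\phi$,
\[
(\partial_t - \Delta^{M_t})\,\phi(\ell(F)) = -\phi''(\ell(F))\, |\nabla^{M_t}\ell(F)|^2.
\]

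The key step is to apply Lemma \ref{omori_yau} to the bounded function
\[
f(x, t) := e^{-\ell_1(F(x,t))} + e^{-\ell_2(F(x,t))} \in (0, 2].
\]
Using the identity above with $\phi(s) = e^{-s}$,
\[
(\partial_t - \Delta^{M_t}) f = -e^{-\ell_1}\,|\nabla^{M_t}\ell_1|^2 - e^{-\ell_2}\,|\nabla^{M_t}\ell_2|^2 \leq 0.
\]
Combining the $\liminf \geq 0$ conclusion of Omori-Yau with the sign of $(\partial_t - \Delta)f$ forces both non-negative summands $e^{-\ell_j(F(x_i, t_i))}\,|\nabla^{M_{t_i}}\ell_j(x_i)|^2$ to tend to $0$ along the Omori-Yau sequence $(x_i, t_i)$. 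In the case where both $\{\ell_j(F(x_i, t_i))\}_i$ remain bounded, the exponential factors stay bounded below, so $|\nabla^{M_{t_i}}\ell_j(x_i)|^2 \to 0$ for each $j = 1, 2$. Since $|\nabla^{M_t}\ell_j|^2 = 1 - \langle a_j, \nu\rangle^2$ for the unit normal $\nu$, this forces $\nu(x_i, t_i)$ to converge to $\pm a_j$ simultaneously for both $j = 1, 2$, which is impossible by the linear independence of $a_1$ and $a_2$.

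The main obstacle is the complementary ``escaping'' case, in which some $\ell_j(F(x_i, t_i))$ diverges to infinity along the sequence. To treat it, translate along the edge $P_1 \cap P_2$ (which preserves both halfspaces) so that the base points $F(x_i, t_i)$ enter a bounded region, invoke local interior curvature estimates for mean curvature flow to extract a smooth subsequential limit flow, and apply the strong parabolic maximum principle to the limit at the attained zero of the caloric function corresponding to the non-escaping coordinate. This forces the limit flow to coincide with the entire hyperplane $P_j$ for one of $j \in \{1, 2\}$ --- which is not contained in the opposite halfspace by non-parallelism, once more contradicting the wedge hypothesis.
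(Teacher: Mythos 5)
Your overall strategy — construct a bounded auxiliary function from the two linear functionals defining the wedge, apply the parabolic Omori--Yau Lemma \ref{omori_yau}, and derive a sign contradiction — is in the same spirit as the paper's proof, and the caloricity of $\ell\circ F$ together with the chain-rule computation for $\phi(\ell\circ F)$ is correct, as is the treatment of the ``non-escaping'' case. However, the ``escaping'' case is where the entire difficulty lives, and the sketch you give for it does not work. First, translations along the edge $P_1\cap P_2$ leave both $\ell_1$ and $\ell_2$ unchanged, so if $\ell_2(F(x_i,t_i))\to\infty$ they cannot bring the base points into a bounded region: the points diverge in the $a_2$-direction, which is orthogonal to the edge, not along it. Second, the only translations that would recenter the diverging $\ell_2$-coordinate do not preserve $H_2$; along such a sequence $T_i(H_2)$ exhausts all of $\R^{n+1}$, so even if a smooth limit flow existed and coincided with $P_1$, this would no longer contradict any ``wedge hypothesis'' — the constraint from $H_2$ has disappeared in the limit. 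Third, and independently, there is no a priori interior curvature estimate for a general proper mean curvature flow (without convexity, mean convexity, graphicality, area bounds, or noncollapsing), so the smooth subsequential limit flow you want to extract is not guaranteed to exist for the class of flows considered in Theorem \ref{theorem_ancient}. The escaping case is therefore a genuine gap, not a routine reduction.

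The paper avoids this case altogether by a different choice of auxiliary function. Rather than $e^{-\ell_1}+e^{-\ell_2}$, it takes $f$ to be the distance $d_R$ to a codimension-two affine subspace $\mathcal{L}_R$ (a translate of the edge $P_1\cap P_2$ in the bisector direction $w_1+w_2$), truncated to be constant outside a ``pocket'' $\mathcal{V}_R$ of the wedge on which $d_R$ is bounded. One computes $\left(\partial_t-\Delta^{M_t}\right)f=-\left(1-\|\nabla^{M_t}f\|^2\right)/d_R$ on $F^{-1}(\mathcal{V}_R)$; since $d_R$ is bounded there and is invariant under translation along the edge (the only unbounded direction inside $\mathcal{V}_R$), the Omori--Yau sequence immediately yields $\left(\partial_t-\Delta\right)f\to -1/\sup f<0$, contradicting part (iii) of Lemma \ref{omori_yau} with no dichotomy into cases. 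If you want to repair your argument, you would need an auxiliary function whose sublevel structure ``sees'' the full wedge geometry uniformly in the edge direction, which is precisely what the distance to $\mathcal{L}_R$ accomplishes and what the sum of two exponentials does not.
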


\begin{remark}\label{remark_eternal}
Theorem \ref{theorem_ancient} holds in particular for proper eternal mean curvature flows, i.e. $I=\Reals$. There are two particularly important subclasses of eternal mean curvature flows: self-translating solitons and minimal hypersurfaces. Therefore it generalizes Theorem~1 contained in the paper \cite{chini_moeller} by the present authors (see also the corollaries and discussion there), as well as classical theorems by Omori \cite{omori} (in the Euclidean case) and Hoffman-Meeks \cite{hoffman_meeks} (in the case without boundary). A third type of ancient solutions are the self-shrinking solitons (where there are many examples, see e.g. \cite{kkm}-\cite{torusphere}), which even obey a halfspace theorem \cite{CE16} (proved by Cavalcante-Espinar using the barriers from \cite{steve-niels}).

As in \cite{chini_moeller}, it is interesting to ask which of the cases can actually occur in Theorem \ref{theorem_classify}. For each case we have respectively: Flat planes, reaper cylinders (plus ``Angenent ovals'' \cite{ang92} and ``ancient pancakes'' \cite{BLT17}), which give slabs. No examples (to our knowledge) of halfspaces. Spheres, cylinders and the bowl soliton for all of $\R^{n+1}$. Note of course that by \cite{CE16}, self-shrinkers cannot provide examples in the halfspace case (see also the discussion in \cite{chini_moeller}).

Note also that Theorem \ref{theorem_ancient} is the ``bi-halfspace'' result we can expect for ancient mean curvature flows. In fact a ``halfspace theorem'' version would be false. There are several counterexamples: for instance planes and grim reaper cylinders. Also, a ``halfspace'' statement would be false, even for those ancient mean curvature flows all of whose time-slices are compact: A counterexample to this is given by the so-called \emph{ancient pancake} \cite{BLT17} (or for $n=1$, Angenent's ovals \cite{ang92}) which is contained for all its evolution in a slab between two parallel hyperplanes, and thus no general halfspace theorem could hold for all ancient solutions.  

We remark that the statement of Theorem \ref{theorem_ancient} is false for general immortal mean curvature flows, i.e. $I=(0,\infty)$. In fact there are self-expanding mean curvature flows such that they are contained for their entire evolution in the intersection of two halfspaces with nonparallel boundaries (see f.ex. \cite{schnuschu}). Note that f.ex. Lemma \ref{Libound} as stated, and hence the proof of Theorem \ref{theorem_ancient}, would have failed if we had instead taken $I=(0,\infty)$.

\end{remark}

\begin{theorem}[Classification of Sets of Reach of Ancient Flows]\label{theorem_classify}
Consider a proper ancient mean curvature flow. Let
\[
\mathcal{R}:= \bigcup_{t\in(-\infty,0)}F_t(M)\subseteq \R^{n+1}
\]
denote its set of reach.
Then the convex hull $\conv(\mathcal{R})$ is either a hyperplane, a slab, a halfspace or all of $\R^{n+1}$.
\end{theorem}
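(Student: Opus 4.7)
The plan is to deduce the classification from the Wedge Theorem (Theorem \ref{theorem_ancient}) by a short convex-geometric argument, using that a closed convex set equals the intersection of its supporting halfspaces.

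Set $C := \overline{\conv(\mathcal{R})}$. If $C = \R^{n+1}$ we are already in the final case of the theorem, so assume otherwise. By Hahn--Banach, $C$ admits a supporting closed halfspace $H_0 = \{x : x \cdot \nu_0 \le c_0\}$ for some unit vector $\nu_0$ and $c_0 \in \R$. I first claim that every supporting halfspace $H$ of $C$ has $\partial H$ parallel to $\partial H_0$. Indeed, $F_t(M) \subseteq \mathcal{R} \subseteq H \cap H_0$ for every $t \in (-\infty,0)$, so non-parallelism of $\partial H$ and $\partial H_0$ would directly contradict Theorem \ref{theorem_ancient}.

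Next, put
\[
a := \sup_{x \in \mathcal{R}} x \cdot \nu_0, \qquad b := \inf_{x \in \mathcal{R}} x \cdot \nu_0,
\]
so that $a \le c_0 < \infty$ and $b \in [-\infty,a]$, and set $C' := \{x : b \le x \cdot \nu_0 \le a\}$ (with the lower bound vacuous when $b = -\infty$). From $\mathcal{R} \subseteq C'$ and the closedness and convexity of $C'$ we obtain $C \subseteq C'$. For the reverse inclusion, if there were $p \in C' \setminus C$, Hahn--Banach would supply a supporting halfspace $H$ of $C$ with $p \notin H$; by the previous paragraph $H$ has the form $\{x \cdot \nu_0 \le a'\}$ with $a' \ge a$, or $\{x \cdot \nu_0 \ge b'\}$ with $b' \le b$, and in either case $C' \subseteq H$, contradicting $p \notin H$. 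Hence $C = C'$.

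Reading off the possible shapes of $C'$ now produces the four stated alternatives: $b = -\infty$ gives the halfspace $\{x \cdot \nu_0 \le a\}$; $b$ finite with $a > b$ gives a slab; $a = b$ gives a hyperplane; and the case $C = \R^{n+1}$ was handled at the start. The only substantive input is the Wedge Theorem application in the second paragraph; the rest is a routine use of Hahn--Banach together with case analysis, so I do not expect any genuine obstacle beyond carefully tracking the possibilities $a, b = \pm\infty$.
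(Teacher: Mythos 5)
Your argument is correct and matches the paper's own proof in its essentials: both identify the (closed) convex hull with the intersection of all halfspaces containing $\mathcal{R}$, observe that Theorem~\ref{theorem_ancient} forces every pair of such halfspaces to have parallel boundaries, and read off the four possible shapes. Your write-up is somewhat more detailed than the paper's (which stops at the parallelism observation and says ``hence the conclusion follows''), and you are slightly more careful in distinguishing $\conv(\mathcal{R})$ from its closure.
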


In the next corollary, we keep track of the time coordinate to get a spacetime track version, which is of course equivalent to Theorem \ref{theorem_classify}.

\begin{corollary}[Spacetime Tracks of Ancient Flows]\label{ST}
Consider for a proper ancient mean curvature flow its spacetime track $\mathcal{ST}$
\[
\mathcal{ST}:= \bigcup_{t\in(-\infty,0)}\{t\}\times F_t(M)\subseteq \R\times\R^{n+1}.
\]
Then $\conv(\pi_2(\mathcal{ST}))$ is either a hyperplane, a slab, a halfspace or all of $\R^{n+1}$, where $\pi_2$ denotes the projection to the $\R^{n+1}$-factor.
\end{corollary}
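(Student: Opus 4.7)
The plan is essentially to observe that this corollary is a purely formal consequence of Theorem \ref{theorem_classify}: the projection $\pi_2$ distributes over the union that defines $\mathcal{ST}$, so the image $\pi_2(\mathcal{ST})$ coincides with the set of reach $\mathcal{R}$, and the classification of $\conv(\mathcal{R})$ then transfers verbatim.

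Concretely, I would first check the set-theoretic identity $\pi_2(\mathcal{ST}) = \mathcal{R}$. In one direction, any element of $\pi_2(\mathcal{ST})$ is of the form $\pi_2(t,x)$ for some $(t,x) \in \{t\} \times F_t(M)$ with $t \in (-\infty,0)$, so it lies in $F_t(M) \subseteq \mathcal{R}$. Conversely, any $x \in \mathcal{R}$ lies in $F_t(M)$ for some $t \in (-\infty,0)$, hence $(t,x) \in \mathcal{ST}$ and $x = \pi_2(t,x) \in \pi_2(\mathcal{ST})$. This yields $\pi_2(\mathcal{ST}) = \mathcal{R}$, and therefore $\conv(\pi_2(\mathcal{ST})) = \conv(\mathcal{R})$.

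At this point I would simply invoke Theorem \ref{theorem_classify}, which asserts that $\conv(\mathcal{R})$ is a hyperplane, a slab, a halfspace, or all of $\R^{n+1}$. There is no real obstacle here; the only ``content'' of this corollary beyond Theorem \ref{theorem_classify} is the bookkeeping observation that remembering the time coordinate in the ambient factor $\R \times \R^{n+1}$ does not enlarge or shrink the spatial footprint once one projects away the $\R$-factor.
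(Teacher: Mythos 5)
Your proposal is correct and matches the paper's (implicit) treatment: the paper states Corollary~\ref{ST} immediately after Theorem~\ref{theorem_classify} with the remark that the two are equivalent, and your verification that $\pi_2(\mathcal{ST})=\mathcal{R}$ is exactly the bookkeeping that justifies this.
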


In the next corollary, the set $\mathcal{R} \cup \{p_\infty\}$ can also be thought of simply as the closure of the set of reach.

\begin{corollary}[Sets of Reach of Compact Convex Ancient Flows]\label{cor_convex}
Consider any compact convex ancient mean curvature flow in $\R^{n+1}$, which at time $0$ becomes extinct at a point $p_\infty \in \R^{n+1}$.

Then $\mathcal{R} \cup \{p_\infty\}$ (the set of points reached, with the singular point added in) is either a slab, a halfspace or all of $\R^{n+1}$. 
\end{corollary}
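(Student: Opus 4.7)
The plan is to apply Theorem \ref{theorem_classify} and upgrade its statement about $\conv(\mathcal{R})$ to one about $\mathcal{R} \cup \{p_\infty\}$ itself, by arguing that under the compactness and convexity assumptions the latter set is already convex and coincides with $\conv(\mathcal{R})$. Each time slice $F_t(M)$ is a smooth compact convex hypersurface in $\R^{n+1}$ and hence bounds a compact convex body $K_t$ with nonempty $(n+1)$-dimensional interior and $\partial K_t = F_t(M)$. Since the mean curvature vector of such a hypersurface points into $K_t$, the standard inclusion/avoidance principle for mean curvature flow implies that the family $\{K_t\}_{t<0}$ is nested: $K_{t_1} \supseteq K_{t_2}$ whenever $t_1 \le t_2 <0$. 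The extinction hypothesis gives $K_t \to \{p_\infty\}$ in Hausdorff distance as $t \to 0^-$, so in particular $p_\infty \in K_t$ for every $t<0$.

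I would next establish the identification
\[
\mathcal{R} \cup \{p_\infty\} \;=\; \bigcup_{t<0} K_t,
\]
which is the main (though elementary) technical point: it converts the set of reach, a union of boundaries, into a nested union of full solid bodies. The inclusion $\subseteq$ is immediate from $\partial K_t \subseteq K_t$ and $p_\infty \in K_t$. For $\supseteq$, fix $p \in K_{t_0}$ with $p \neq p_\infty$; by Hausdorff convergence $K_t \to \{p_\infty\}$ there exists $t_1 \in (t_0, 0)$ with $p \notin K_{t_1}$, and then $t^* := \sup\{t \in [t_0,t_1] : p \in K_t\}$ satisfies $p \in \partial K_{t^*}\subseteq \mathcal{R}$ by continuity of $t \mapsto K_t$.

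Consequently $\mathcal{R} \cup \{p_\infty\}$ is a nested union of convex sets, hence convex. Since $p_\infty$ is a limit of points of $\mathcal{R}$, it lies in the closed set $\conv(\mathcal{R})$, so $\conv(\mathcal{R}) = \conv(\mathcal{R}\cup\{p_\infty\}) = \mathcal{R}\cup\{p_\infty\}$. Theorem \ref{theorem_classify} then yields that $\mathcal{R} \cup \{p_\infty\}$ is a hyperplane, a slab, a halfspace, or $\R^{n+1}$, and the hyperplane option is ruled out because each $K_t$ has nonempty $(n+1)$-dimensional interior and therefore cannot lie in a hyperplane.
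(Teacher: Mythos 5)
Your proposal is correct and follows essentially the same approach as the paper's proof: identify $\mathcal{R}\cup\{p_\infty\}$ with the nested union of the solid convex bodies $K_t=\Omega_t$ bounded by the time-slices (the paper phrases this as ``the flow sweeps out the interior of each $\Omega_t$''), conclude that $\mathcal{R}\cup\{p_\infty\}=\conv(\mathcal{R})$ is convex, and then apply the classification of $\conv(\mathcal{R})$. Your version of the ``sweeping out'' step, via $t^*:=\sup\{t:p\in K_t\}$, and your exclusion of the hyperplane case via the nonempty interior of $K_t$, are just a slightly more explicit rendering of the same argument (the paper invokes Sacksteder for the strictly convex body and Huisken for extinction at a round point).
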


\begin{remark}
Note that Corollary \ref{cor_convex} is in agreement with  Corollary 6.3 in \cite{wang} where blow-downs $(-t)^{-\frac{1}{2}} F_t(M)$ as $t \rightarrow -\infty$  for convex ancient solutions were classified: any $\mathbb{S}^k \times \R^{n-k}$, with  $k =1, \dots, n $ or  multiplicity two hyperplanes. 
It is not clear to us whether the halfspace case of Corollary \ref{cor_convex} could   occur. For instance in the convex case it does not happen in the curve shortening flow (i.e. the case $n=1$)  because of the classification for closed curves in \cite{dhs_2010}, which shows that the only possible sets of reach are strips and $\R^2$.

Moreover Wang (Corollary 6.1 \cite{wang}) showed that the set of reach of a (not necessarily compact) convex ancient mean curvature flow arising as a limit flow of a mean convex flow, is the entire $\R^{n+1}$. Note that the set of reach there is taken over the whole maximal time interval, which might be $(-\infty, \infty)$.

\end{remark}

\section{Proofs}

The proof of Lemma \ref{omori_yau} is based on the Omori-Yau maximum principle (tracing its roots back to \cite{omori}--\cite{cheng-yau}) proven by Ma in \cite{john_ma}. The main difference is that here we are interested in ancient mean curvature flows and thus our time interval is not finite, which complicates slightly (but essentially) the proof. On the other hand, because of the applications we have in mind, we focus on the case where the ambient manifold is Euclidean space $\R^{n+1}$ and the codimension is 1, and the argument we give here is self-contained.

\begin{proof}[Proof of Lemma \ref{omori_yau}]

Let $(\bar{x}_i, \bar{t}_i)$ be a sequence in $M \times (-\infty, 0)$ such that
\begin{equation}\label{defbars}
\lim_{i \rightarrow \infty} f(\bar{x}_i, \bar{t}_i) = \sup_{M \times (-\infty, 0)}f.
\end{equation}

Consider the function $r \colon \R^{n+1} \to \R$ defined as $r(y) \coloneqq \|y\|$. This defines a function $\varrho$ on $M \times (-\infty, 0)$ by $\varrho(x,t) \coloneqq r(F(x,t))$.

Now let $(\varepsilon_i)_{i \in \N}$ be the sequence of positive numbers (well-defined even if $\varrho(\bar{x}_i, \bar{t}_i)=0$)
\beq
0<\varepsilon_i:=\min\left(\frac{1}{i},\frac{1}{i}\frac{1}{\varrho(\bar{x}_i, \bar{t}_i) ^2}\right)<\infty.
\eeq

Note that $\lim_{i\to\infty}\varepsilon_i = 0$ and that for every $i \in \N$
\beq\label{epsrho}
\varepsilon_i \varrho(\bar{x}_i, \bar{t}_i) ^2 \le \frac{1}{i}.
\eeq

Let us now for $i=1,2,\ldots$ define $f_i: M\times (-\infty,0)\to \Reals$ by
\beq\label{fidef}
f_{i} (x, t) \coloneqq f(x,t) - \varepsilon_i \left(\varrho(x, t)\right)^2.
\eeq
Note that each $f_i $ is bounded from above by $\sup_{M \times   (-\infty, 0)} f < \infty$. 

\underline{Claim:} Fix a time $t \in (-\infty, 0)$ and fix $i\in \mathbb{N}$. Then there exists a point $x_t^i\in M$ where the function $f_i(\cdot, t) $  attains its supremum over $M$. Furthermore, this is locally uniform in the sense that considering $\tau$ near $t$, all the points $x_{\tau}^i\in M$ can be chosen from a fixed compact subset $K\subseteq M$ (with $K=K_t$ possibly dependent on $t$ and on the proximity of $\tau$ to $t$).

If $M$ is compact, then the claim is trivial. If $M$ is not compact, it follows from the crucial properness assumption. In fact, let $R > 0$ be large enough so that $F_t(M) \cap B_R \ne \emptyset$, where $B_R$ is the ambient open ball of radius $R>0$ in $\R^{n+1}$ centered at $0$. Since $f$ is bounded on $M \times (-\infty, 0)$, we can choose $S>R>0$ so that
\beq\label{minusepses}
 \sup_{M \times (-\infty, 0)} f - \varepsilon_i S^2 <\inf_{M \times (-\infty, 0)} f - \varepsilon_i R^2
\eeq

Equation (\ref{fidef}) now shows that for points $p \in M \setminus F^{-1}_t(B_S)$ (which  is nonempty, because from the properness of $F_t$ and noncompactness of $M$ follows that $F_t(M)$ cannot be contained in any finite radius ambient ball) holds: 
\beq
f_i(p, t) \leq f(p, t) - \varepsilon_i S^2 \le \sup_{M \times (-\infty, 0)} f - \varepsilon_i S^2.
\eeq

Therefore taking the supremum over $p\in M \setminus F^{-1}_t(B_S)$  yields
\begin{align}
\sup_{M \setminus F^{-1}_t (B_S)} f_i(\cdot, t) \leq \sup_{M \times (-\infty, 0)} f - \varepsilon_i S^2 < \inf_{M \times (-\infty, 0)} f - \varepsilon_i R^2, 
\end{align}
using (\ref{minusepses}). Thus, finally, using $f-\varepsilon_i R^2\leq f_i$ on $F^{-1}_t(B_R)$:
\beq\label{sup-bit}
\sup_{M \setminus F^{-1}_t (B_S)} f_i(\cdot, t) < \inf_{F^{-1}_t(B_R)} f_i(\cdot,t)
\eeq

From continuity of $F$, we also have that there exists $\delta >0$ such that, for every time $\tau \in (t - \delta, t+\delta)$, we have $F_\tau(M) \cap B_R \ne \emptyset$ and thus  \eqref{sup-bit} still holds. Properness of the flow implies that each $F^{-1}_\tau(\bar{B}_S) \subseteq M$ is compact, therefore we have for every $\tau \in (t - \delta, t + \delta)$:
\begin{equation}\label{eq_max_prop}
\sup_{M} f_i(\cdot, \tau) = \max_{F^{-1}(\bar{B}_S)} f_i(\cdot,\tau ).
\end{equation}

It only remains to find a uniform compact $K \subseteq M$ as claimed. This is also guaranteed by the properness of the immersion at each time, as follows.

Since  $F^{-1}_t (\bar{B}_{S})$ is compact, we can choose $K$ to be any larger compact set such that $F^{-1}_t (\bar{B}_{S}) \subseteq K^\circ\subseteq M$, where $K^\circ$ denotes the interior of $K$. Consider the closed set $C \coloneqq M \setminus K^\circ$. The two sets $F_t(C)$ and $B_{S}$ are disjoint. Together with compactness of $\bar{B}_{S}$, and the assumption that $F_t$ is proper (and hence a closed map), which ensures closedness of $F_t(C)$, it then implies $\dist_{\R^{n+1}}(F_t(C), \bar{B}_{S}) > 0$.

From the triangle inequality follows that $\dist_{\R^{n+1}}(F_\tau(C), B_{S})$ is continuous in $\tau\in  (t-\delta, t+\delta)$. Therefore after possibly taking $\delta >0$ smaller holds $\dist_{\R^{n+1}}(F_\tau(C), B_{S}) > 0$ for all $\tau\in (t-\delta, t+\delta)$. But then as claimed $F_\tau^{-1}(B_{S})\subseteq K$ and finally, using (\ref{eq_max_prop}) we finish the proof of the final part of the claim:
\beq\label{mantegazza_K}
\forall \tau\in I:\quad \sup_{M}f_i(\cdot, \tau) =\max_{K} f_i(\cdot, \tau).
\eeq

For any time $t\in (-\infty, 0)$ and $i\in\mathbb{N}$, let us use the claim and denote
\beq\label{slicemax}
L_i(t):= \max_{x\in M} f_i(x, t)=f_i(x^i_t, t),
\eeq
for some $x^i_t\in K_t$.

Note that the function $(-\infty, 0) \ni t \mapsto L_i(t)$ is bounded from above by $L:=\sup_{M \times   (-\infty, 0)} f$. 

By Hamilton's Trick \cite{hamiltonstrick} (see f.ex. Lemma 2.1.3. in \cite{mantegazza}, which is where we use the uniformicity property (\ref{mantegazza_K}) and the $K_t$ in the claim), each $L_{i}$ is a locally Lipschitz function of $t$ and therefore continuous. The function $L_i$ is also differentiable almost everywhere, and at any of its differentiability times $ t \in (-\infty, 0)$ we have as usual
$$
\frac{dL_i}{d t} (t) = \frac{\partial f_i }{\partial t} (x_t^i, t).
$$

This implies (using Lemma \ref{lemma_lipschitz} in the Appendix) that there is a time $t_i \in (-\infty, 0)$  such that $\frac{d L_i}{dt}(t_i)$ exists and with $x_i \coloneqq x^i_{t_i}$ there holds
\begin{equation}\label{Libound}
\frac{\partial f_i}{\partial t}  (x_i, t_i) = \frac{d L_i}{dt}(t_i) \geq - 2\varepsilon_i,
\end{equation}
and also
\begin{equation}
\bigg| L_i(t_i) - \sup_{(-\infty,0)} L_i\bigg| < \varepsilon_i,
\end{equation}
or in other words
\begin{equation}\label{sup_L_i}
\bigg| f_i(x_i,t_i) - \sup_{M\times (-\infty,0)} f_i\bigg| < \varepsilon_i.
\end{equation}

We have from (\ref{slicemax}) that $\Delta^{M_{t_i}} f_i(x_i,t_i)\leq 0$, therefore with \eqref{Libound}
\begin{equation}\label{eq_max}
\left(\frac{\partial}{\partial t} - \Delta^{M_{t_i}} \right) f_{i}(x_i, t_i) \ge - 2\varepsilon_i.
\end{equation}

From standard computations and \eqref{mcf_equation} we also have (as the only step in the proof where we use the mean curvature flow equation) the following
\begin{equation}\label{eq_dist}
\left(\frac{\partial}{\partial t} - \Delta^{M_{t}} \right) \left(\varrho (x, t)\right)^2  = -2n,
\end{equation}
for every $(x, t) \in M \times (-\infty, 0)$.

Combining \eqref{eq_max} and \eqref{eq_dist}, we get with (\ref{fidef})

\begin{equation}
\left(\frac{\partial}{\partial t} - \Delta^{M_{t_i}} \right) f (x_i, t_i) \ge -2(n+1)\varepsilon_i.
\end{equation}
This shows Part \emph{(iii)} of the Lemma.
Let us now check that also \emph{(i)} and \emph{(ii)} hold. 

From \eqref{fidef} and \eqref{sup_L_i}, and since $L_i(t_i)=f_i(x_i, t_i) = \max_{M} f_i(\cdot, t_i)$, we have (see \eqref{defbars} for the definition of $(\bar{x}_i, \bar{t}_i)$)

\beq\label{inequality_f_f_i}
\begin{split}
f(x_i, t_i) &\ge f_i(x_i, t_i) > \sup_{M \times (-\infty, 0)} f_i - \varepsilon_i\\ &\ge f_i(\bar{x}_i, \bar{t}_i) - \varepsilon_i \geq f(\bar{x}_i, \bar{t}_i) - \frac{1}{i} - \varepsilon_i,
\end{split}
\eeq
where the final inequality made use of \eqref{epsrho}. This shows Part \emph{(i)}, by taking the limit for $i \rightarrow \infty$ in the string of inequalities \eqref{inequality_f_f_i}. 

Let us now show Part \emph{(ii)}. Observe that $\nabla^{M_{t_i}}f_i(x_i, t_i)= 0$, since $x_i\in M$ is a maximum point for $f_i(\cdot, t_i)$. Thus we have
$$
\nabla^{M_{t_i}} f(x_i, t_i) = 2 \varepsilon_i \varrho(x_i, t_i) \nabla^{M_{t_i}}\varrho(x_i, t_i) = 2 \varepsilon_i \varrho(x_i, t_i) \left( \left(\nabla^{\R^{n+1}}  r\right)(F(x_i, t_i ) \right)^\top.
$$ 

Noting
\beq
\left\|\left( \left(\nabla^{\R^{n+1}}  r\right)(F(x_i, t_i ) \right)^\top\right\| \le  \left\| \left(\nabla^{\R^{n+1}}  r\right)(F(x_i, t_i ) \right\| \le 1,
\eeq
it is enough to show that $ \varepsilon_i \varrho(x_i, t_i) \xrightarrow[ i \rightarrow \infty]{} 0$ (note that \eqref{epsrho} concerns $(\bar{x}_i, \bar{t}_i)$). But from \eqref{inequality_f_f_i}, we have $f_i(x_i, t_i) > f(\bar{x}_i, \bar{t}_i) - \frac{1}{i} - \varepsilon_i$, so that
$$
\varepsilon_i \varrho(x_i, t_i)^2 = f(x_i, t_i) -f_i(x_i, t_i) < \sup_{M \times (-\infty, 0)}f - f(\bar{x}_i, \bar{t}_i) + \frac{1}{i} + \varepsilon_i.
$$
Therefore $\sqrt{\varepsilon_i} \varrho(x_i, t_i) \xrightarrow[i \rightarrow \infty]{} 0$ and therefore by (\ref{defbars}), finally

\[
\varepsilon_i \varrho(x_i, t_i) \xrightarrow[ i \rightarrow \infty]{} 0.
\]
\end{proof}


\begin{proof}[Proof of Theorem \ref{theorem_ancient}]
The rest of the proof is very similar to that of Theorem 1 in \cite{chini_moeller}, in the case of self-translating solitons without boundary, the proof of which was in turn inspired by an idea for 2-dimensional minimal surfaces in $\Reals^3$ by Borb\'e{}ly in \cite{bo11}. In particular we are going to apply Lemma \ref{omori_yau} to a function $f$ which is constructed exactly in the same way as in \cite{chini_moeller}.

Let $H_1, H_2 \subseteq \R^{n+1}$ be two halfspaces such that $P_1 \coloneqq \partial H_1$ and $ P_2 \coloneqq \partial H_2$ are not parallel. Let us assume by contradiction that there exists a proper ancient mean curvature flow $F \colon M \times (-\infty, 0) \to \R^{n+1}$ such that $F_t(M) \subseteq H_1 \cap H_2$ for every $t \in (-\infty, 0)$.

We can assume without loss of generality that $0 \in P_1 \cap P_2$. 
Let $w_1, w_2 \in \mathbb{S}^{n}$ such that $H_i = \{x \in \R^{n+1} \colon \langle x, w_i\rangle \ge 0\}$.

For $R>0$, let $\mathcal{L}_R \subseteq \R^{n+1}$ be the $(n-1)$-dimensional affine subspace obtained by translating $P_1 \cap P_2$ in the direction of $w_1 + w_2$ and such that the boundary of the solid cylinder 
$$
\mathcal{D}_R \coloneqq \{x \in \R^{n+1} \colon \dist(x, \mathcal{L}_R) \le R\}
$$ 
is tangent to $P_1$ and $P_2$. 

Let $d_R \colon \R^{n+1} \to \R$ denote the distance function from $\mathcal{L}_R$, i.e. $d_R(x) \coloneqq \dist(x, \mathcal{L}_R)$. 
Observe that $(H_1 \cap H_2) \setminus \mathcal{D}_R$ consists of two connected components. Let $\mathcal{V}_R$ be the one where $d_R$ is bounded. Let us choose $R>0$ large enough such that there exists $t \in (-\infty, 0)$ such that $F_t(M) \cap \mathcal{V}_R \ne \emptyset$.

Let us now define a function $f \colon M \times (-\infty, 0) \to \R$ as follows
\begin{equation}\label{definition_f}
f(x,t) \coloneqq 
\begin{cases}
d_R(F(x, t)) \qquad &\text{ if } F(x,t) \in \mathcal{V}_R \\
R \qquad &\text{ otherwise.}
\end{cases}
\end{equation}

Observe that by construction $f$ is continuous and bounded. Since we have chosen $R>0$ in such a way that $F(x, t) \in \mathcal{V}_R$ for some $(x,t) \in M \times (-\infty, 0)$, we have that 
\beq\label{sup_of_f}
0< R < \sup_{F^{-1}(\mathcal{V}_R)} f = \sup_{M \times (-\infty, 0)} f <\infty.
\eeq

We want to apply the Omori-Yau maximum principle in the form of Lemma \ref{omori_yau} to $f$. Note that $f$ is smooth on the interior of $F^{-1}(\mathcal{V}_R)$ and, because of \eqref{sup_of_f}, this is actually enough in order to apply Lemma \ref{omori_yau}.

By standard computations, see e.g. \cite{chini_moeller}, one can check that on $F^{-1}(\mathcal{V}_R)$ we have
\begin{equation}\label{equation_heat_operator_f}
\lef \frac{\partial}{\partial t} - \Delta^{M_t} \right) f = - \frac{1 - \|\nabla^{M_t} f \|^2}{d_R}.
\end{equation}

Let $(x_i, t_i) \in M \times (-\infty, 0)$ be an Omori-Yau sequence given by Lemma \ref{omori_yau}. From \eqref{equation_heat_operator_f}, Part \emph{(ii)} of Lemma \ref{omori_yau} and \eqref{sup_of_f}, we have that the function $f$ eventually becomes strictly subcaloric at points in the sequence:
$$
\lim_{i \rightarrow \infty} \lef \frac{\partial}{\partial t} - \Delta^{M_{t_i}} \right) f(x_i, t_i) = - \frac{1}{\sup_{M \times (-\infty, 0)} f} < 0.
$$

On the other hand, this is in contradiction with Part \emph{(iii)} of Lemma \ref{omori_yau}, which concludes the proof.
\end{proof}

\begin{proof}[Proof of Theorem \ref{theorem_classify}]
This proof proceeds quite like in the case of minimal surfaces \cite{hoffman_meeks} and self-translaters \cite{chini_moeller}:
\[
\conv(\mathcal{R}) = \bigcap \{H \subseteq \R^{n+1} \colon H \text{ is a halfspace s.t. }\mathcal{R}\subseteq H\},
\]
the intersection of all halfspaces containing the set of reach. If any such two halfspaces $H_1$ and $H_2$ were not parallel, we would conclude that for all times $t\in(-\infty,0)$ the flow is contained in a non-halfspace wedge, $F_t(M)\subseteq H_1\cap H_2$, violating Theorem \ref{theorem_ancient}. Hence the conclusion follows.
\end{proof}

\begin{proof}[Proof of Corollary \ref{cor_convex}]
Let us first remind the reader that by a convex hypersurface $\Sigma^n\subseteq\Reals^{n+1}$ we mean one where all principal curvatures $\kappa_i>0, i=1,\ldots,n$, and that by a theorem of Sacksteder \cite{sacksteder}, this implies that $\Sigma = \partial \Omega$, for some strictly convex domain in $\Reals^{n+1}$. Knowing this we immediately rule out the ``flat plane minus one point'' as a possible set of reach.

Let now $F \colon M \times (-\infty, 0)\to\R^{n+1} $ be a mean curvature flow as in the statement.  Following Huisken \cite{Hu84}, the flow will become extinct at a ``round point'' $p_\infty \in \R^{n+1}$ at time 0. Let $\Omega_t$ be the bounded convex body such that $\Sigma_t = \partial \Omega_t$. We have that the flow sweeps out the interior of each $\Omega_t$. These facts easily imply that adding the singular point to $\mathcal{R}$ we get a convex set: Namely, suppose that $p_1,p_2\in \mathcal{R}$ are given. Then there exist $t_1,t_2\in(-\infty,0)$ so that $p_i\in F_{t_i}(M)$, and with $t_0:=\min(t_1,t_2)$ we have, by the monotonicity of the domains $\Omega_t$, that $p_1,p_2\in \Omega_{t_0}$. Also, considering the line segment between $p_1$ and $p_2$ it is, by convexity of $\Omega_{t_0}$, contained in $\Omega_{t_0}$. Hence by the property that the flow sweeps the interior of $\Omega_{t_0}$, the line segment is contained in $\mathcal{R} \cup \{p_\infty\}$. The case where one $p_i = p_\infty$ follows similarly (or by continuity).

Thus, having shown that $\conv(\mathcal{R}) = \mathcal{R} \cup \{p_\infty\}$ under these extra assumptions, we apply Theorem \ref{theorem_ancient} to finish the proof of Corollary \ref{cor_convex}.
\end{proof}

\newpage
\section{Appendix}

In this section, we state and prove the following elementary lemma, needed in the proofs in the paper's main sections:

\begin{lemma}\label{lemma_lipschitz}
Let $L \colon (-\infty, 0) \to \R$ be a locally Lipschitz function bounded from above. 

Then for every $\varepsilon>0$ there exists some $t_0 \in (-\infty, 0)$ such that $L$ is differentiable at $t_0$ and satisfies the following
\begin{enumerate}[(i)]
\item $L'(t_0) \ge - \varepsilon$,
\item[]
\item $ L(t_0) > \sup_{(-\infty, 0)} L  - \varepsilon.$
\end{enumerate}
\end{lemma}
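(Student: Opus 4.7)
The plan is to proceed by contradiction, combined with the one-dimensional Rademacher-type fact that every locally Lipschitz $L : (-\infty, 0) \to \R$ is differentiable almost everywhere and is absolutely continuous on each compact subinterval, so that the fundamental theorem of calculus applies to the a.e.-defined derivative $L'$. Set $S := \sup_{(-\infty,0)} L < \infty$. Suppose for some $\varepsilon > 0$ no $t_0$ as in the statement exists; then at every point $t \in (-\infty,0)$ where $L$ is differentiable and $L(t) > S - \varepsilon$, one must have $L'(t) < -\varepsilon$. By a.e.\ differentiability this gives $L'(t) < -\varepsilon$ for almost every $t$ in the superlevel set $\{L > S - \varepsilon\}$.

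Next, choose $s \in (-\infty,0)$ with $L(s) > S - \varepsilon/2$, using the definition of the supremum. The dichotomy to exploit is captured by
\[
a := \sup\{t \in (-\infty, s] : L(t) \le S - \varepsilon\} \in [-\infty, s].
\]
In the case $a > -\infty$, continuity of $L$ gives $L(a) \le S - \varepsilon$, and the inequality $L(s) > S - \varepsilon/2$ together with continuity forces $a < s$. On the whole open interval $(a, s)$ we then have $L > S - \varepsilon$, so $L' < -\varepsilon$ almost everywhere there; integrating via the fundamental theorem of calculus yields
\[
L(s) - L(a) \;=\; \int_a^s L'(t)\, dt \;<\; -\varepsilon(s - a) \;<\; 0,
\]
so $L(s) < L(a) \le S - \varepsilon$, contradicting the choice of $s$.

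In the remaining case $a = -\infty$, i.e.\ $L > S - \varepsilon$ on all of $(-\infty, s]$, the same a.e.\ derivative bound integrated over $[s - T, s]$ gives $L(s - T) > L(s) + \varepsilon T$ for every $T > 0$; for $T$ large enough the right-hand side exceeds $S$, violating the global upper bound $L \le S$ and closing the argument. The only real technical ingredient is the FTC/absolute continuity step for locally Lipschitz $L$, which I expect to be the main thing to pin down carefully; once that is in hand, the dichotomy above cleanly disposes of both cases and establishes the lemma.
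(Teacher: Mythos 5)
Your proof is correct, and it takes a genuinely different route from the paper's. The paper argues directly, splitting into cases according to where the supremum is located (attained at an interior point, approached as $t\to 0^-$, or approached as $t\to -\infty$), and in each case produces a differentiability point near the supremum with derivative bounded below, via a local integration of $L'$ over a short interval, plus a monotonicity argument in the $-\infty$ case. You instead argue by contradiction and introduce the single quantity $a=\sup\{t\le s : L(t)\le S-\varepsilon\}$, which unifies all three of the paper's cases into two: if $a>-\infty$, integrating $L'<-\varepsilon$ over $(a,s)$ contradicts $L(s)>S-\varepsilon/2$; if $a=-\infty$, integrating over $[s-T,s]$ contradicts boundedness from above. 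This is cleaner and avoids the paper's somewhat delicate step of finding an interval with no local extrema and invoking monotonicity; what the paper's direct approach buys is an explicit construction of the point $t_0$, but both proofs rest on the same technical ingredient (a.e.\ differentiability and the fundamental theorem of calculus for locally Lipschitz, hence locally absolutely continuous, functions). One small remark: you write the strict inequality $\int_a^s L'\,dt < -\varepsilon(s-a)$; the non-strict $\le$ is immediate and already suffices for the contradiction since $s>a$, so you need not justify strictness.
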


\begin{proof}[Proof of the Lemma]
Recall that Lipschitz continuity implies absolute continuity.  Let us fix $\varepsilon >0$. 
Let us first assume that there exists $t_{0} \in (-\infty, 0)$ such that 
\beq
L(t_{0}) = \sup_{(-\infty, 0)} L.
\eeq
If $L$ is differentiable at $t_1$, then we are done. Let us assume it is not. Let $\delta>0$ be such that $|L(t) - L(t_1)| < \varepsilon$ for any $|t -t_1| < \delta $. 
Then
\beq
\int_{t_1 - \delta}^{t_1} L'(t) \, dt = L(t_1) - L(t_1 - \delta) \ge 0.
\eeq
Therefore there exists $t_0 \in (t_1 - \delta, t_1)$ such that $L$ is differentiable at $t_0$ and such that $L'(t_0) \ge 0$. Moreover $|L(t_0) - L(t_1)| < \varepsilon$. 

Let us  now assume that the supremum is not attained. The case where $\sup_{(-\infty, 0)} L = \lim_{t \rightarrow 0^-} L(t)$ can be studied similarly to the above. 

Therefore let us study the case where $\sup_{(-\infty, 0)} L = \lim_{t \rightarrow -\infty} L(t)$.
We can assume that there is an interval $I \coloneqq (-\infty, \tau) \subseteq (-\infty, 0)$ such that $L|_I \ge \sup L -\varepsilon$ and such that there are no local maxima  and no local minima in $I$. Namely, otherwise we could proceed as we did above. Note however that for the case of local minima we have to consider intervals of the kind $(t_1, t_1 + \delta)$ instead. 

Having no local extrema implies together with continuity that the function $L$ is monotone on $I$. Since $\sup_{(-\infty, 0)} L = \lim_{t \rightarrow -\infty} L(t)$, it must be monotonically decreasing and thus satisfy $L' \le 0$ at all points of differentiability in $I$, so Lebesgue-almost everywhere. 
Moreover 
\beq
\int_I L' = \int_{-\infty}^\tau L'(t) \, dt = L(\tau) - \sup L \ge - \varepsilon.
\eeq
Therefore $L'|_I$ is summable. There also exists a differentiability point $t_0$ such that $L'(t_0) \ge -\varepsilon$, otherwise we would get a contradiction with summability from $L'(t_0) < -\varepsilon$ a.e in $I$.
\end{proof}

\end{document}